\numberwithin{equation}{section}
\numberwithin{figure}{section}
\theoremstyle{plain}
\newtheorem{thm}{\protect\theoremname}[section]
  \theoremstyle{definition}
  \newtheorem{defn}[thm]{\protect\definitionname}
  \theoremstyle{plain}
  \newtheorem{lem}[thm]{\protect\lemmaname}
  \theoremstyle{plain}
  \newtheorem{cor}[thm]{\protect\corollaryname}
  \providecommand{\corollaryname}{Corollary}
  \providecommand{\definitionname}{Definition}
  \providecommand{\lemmaname}{Lemma}
\providecommand{\theoremname}{Theorem}
\begin{document}

\title{{\Large{}Constructive Harmonic Conjugates }}

\author{Mark Mandelkern}

\maketitle
{\let\thefootnote\relax\footnotetext{2010 Mathematics Subject Classification. Primary 51A05; Secondary 03F65.
}}

{\let\thefootnote\relax\footnotetext{Key words and phrases. Projective geometry, harmonic conjugates, constructive mathematics. }}

\noindent {\small{}\vspace{0.8cm}
}{\small \par}
\begin{abstract}
\noindent In the study of the real projective plane, harmonic conjugates
have an essential role, with applications to projectivities, involutions,
and polarity. The construction of a harmonic conjugate requires the
selection of auxiliary elements; it must be verified, with an invariance
theorem, that the result is independent of the choice of these auxiliary
elements. A constructive proof of the invariance theorem is given
here; the methods used follow principles put forth by Errett Bishop. 
\end{abstract}
\noindent {\small{}\vspace{1.8cm}
}{\small \par}

\section{{\large{}Introduction} \label{sec 1:Introduction}}

The classical theory of the real projective plane is highly nonconstructive;
it relies heavily, at nearly every turn, on the\textit{ Law of Excluded
Middle}\emph{.} For example, in classical treatises it is assumed
that a given point is either on a given line, or outside the line,	although
this assumption is constructively invalid.\footnote{See, e.g., \cite[Example 1.1]{M16}} 

We follow the constructivist principles put forward by Errett Bishop
\cite{B67}. Avoiding the \emph{Law of Excluded Middle, }constructive
mathematics is a generalization of classical mathematics, just as
group theory, a generalization of abelian group theory, avoids the
commutative law. Thus every result and proof obtained constructively
is also classically valid. For the origins of modern constructivism,
and the disengagement of mathematics from formal logic, see Bishop's
``Constructivist Manifesto'' \cite[Chapter 1]{B67,BB85}. Further
discussion and additional references will be found in \cite{M16}. 

A constructive real projective plane is constructed in \cite{M16};
topics include Desargues's Theorem, harmonic conjugates, projectivities,
involutions, conics, Pascal's Theorem, poles, and polars. Here we
are concerned with the invariance theorem for harmonic conjugates.
The construction of the harmonic conjugate of a point requires the
selection of auxiliary elements; it must be demonstrated that the
result is uniquely determined, independent of the choice of these
auxiliary elements.

In an intuitionistic context, a harmonic conjugate construction  was
given by A. Heyting \cite[Section 7]{H28}. However, the proof of
the invariance theorem given there uses axioms for projective space;
it does not apply to a projective plane constructed using only axioms
for a plane. Moreover, the proof is incomplete; it applies only to
points distinct from the base points. For applications of harmonic
conjugates, e.g., to projectivities, involutions, and polarity, a
complete proof is required. 

A harmonic conjugate construction is given in \cite{M16}, using only
axioms for a plane; it applies uniformly to all points on the base
line. \label{error}However, the proof of the invariance theorem \cite[Theorem 4.7]{M16}
given there is incorrect; aside from the error, the proof is excessively
complicated, and objectionable on several counts. The discovery of
the error\footnote{The error in the proof of \cite[Theorem 4.7]{M16} is in the use of
the conclusion of step (7) beyond that step, whereas it is valid only
in relation to an assumption made in a previous step. } is due to Guillermo Calderón \cite{C1}; he also obtained a proof
of the invariance theorem, using the method of \cite{M16}, within
a computer formalization of projective geometry \cite{C2}. 

Using a method which is simpler, more transparent, and more direct,
than the method used in \cite{M16}, we will give a constructive proof
of the invariance theorem below in Section \ref{Sec. 3 - Inv Thm.}. 

Background information, references to other work in constructive geometry,
and  properties of the constructive real projective plane, will be
found in \cite{M16}.

\section{{\large{}Preliminaries} \label{sec 2 - Prelim}}

Axioms, definitions, and results are given in \cite{M16} for the
constructive real projective plane $\mathbb{P}$. One axiom has a
preëminent standing in the axiom system; it is indispensable for virtually
all constructive proofs involving the plane $\mathbb{P}$.
\begin{quote}
\textbf{Axiom C7.} \cite[Section 2]{M16} If $l$ and $m$ are distinct
lines, and $P$ is a point such that $P\neq l\cdot m$, then either
$P\notin l$ or $P\notin m$.
\end{quote}
This axiom is a strongly-worded constructive form of the classical
statement that the point common to two distinct lines is \emph{unique}. 

The proof of the invariance theorem requires Desargues's Theorem and
its converse. The following definition includes explicit details which
are required for constructive applications of Desargues's Theorem. 
\begin{defn}
\noindent \label{Defn Dist Triangles} Two triangles are \emph{distinct}
if corresponding vertices are distinct and corresponding sides are
distinct.\footnote{\noindent It is then easily shown that the lines joining corresponding
vertices are distinct, and the points of intersection of corresponding
sides are distinct. }

Distinct triangles are said to be \emph{perspective from the center
$O$} if the lines joining corresponding vertices are concurrent at
the point $O$, and $O$ lies outside\footnote{The relation\emph{ P lies outside l,} or \emph{l avoids P}, written
$P\notin l$, is used here in a strict, affirmative sense; q.v., \cite[Definition 2.3]{M16}. } each of the sides. 

Distinct triangles are said to be \emph{perspective from the axis
$l$} if the points of intersection of corresponding sides are collinear
on the line $l$, and $l$ avoids\footnote{Ibid.} each of the vertices. 
\end{defn}
Desargues's Theorem is adopted as an axiom in \cite{M16}, and then
used to prove the converse. \\

\noindent \textbf{Axiom. }Desargues's Theorem. \emph{If distinct triangles
are perspective from a center, then they are perspective from an axis.}
\begin{thm}
\noindent \emph{\cite[Theorem 3.2]{M16}} If distinct triangles are
perspective from an axis, then they are perspective from a center. 
\end{thm}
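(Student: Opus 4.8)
The plan is to prove the converse of Desargues's Theorem by the standard classical trick of applying Desargues's Theorem (the forward direction, available as an axiom) to a cleverly chosen pair of auxiliary triangles, while taking care that every hypothesis of the axiom—the distinctness conditions of Definition~\ref{Defn Dist Triangles} and the strict "lies outside" conditions—is verified constructively rather than assumed. Let the two given triangles be $ABC$ and $A'B'C'$, perspective from the axis $l$, so that the intersection points $P = BC\cdot B'C'$, $Q = CA\cdot C'A'$, $R = AB\cdot A'B'$ all lie on $l$, and $l$ avoids each of the six vertices. I must produce a center, i.e., show that $AA'$, $BB'$, $CC'$ are concurrent at some point $O$ lying outside each side.

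First I would set up the auxiliary triangles whose perspectivity-from-a-center is visible from the given data. The classical device is to consider the two triangles $APA'$-type configurations: concretely, take the triangles with vertices $A, A', R$ and $C, C', P$ (or the analogous pairing dictated by the labelling), arranged so that their corresponding sides meet on the line $BB'$ and their corresponding-vertex lines pass through $Q$. Then the hypothesis that $ABC, A'B'C'$ are axis-perspective translates into the statement that these auxiliary triangles are \emph{center}-perspective from $Q$, so the forward Desargues axiom applies and yields that they are axis-perspective; reading off the resulting collinearity gives a point through which $AA'$ and $BB'$ both pass. Running the argument a second time with a cyclic relabelling (interchanging the roles of the pairs) shows that $CC'$ passes through the same point, producing the common center $O$.

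The heart of the work—and the reason a constructive proof is not automatic—lies not in the incidence bookkeeping but in discharging the affirmative hypotheses. Before I may invoke the Desargues axiom on the auxiliary triangles, I must verify that they are \emph{distinct} in the sense of Definition~\ref{Defn Dist Triangles} (corresponding vertices distinct, corresponding sides distinct) and that the purported center lies strictly \emph{outside} each of their sides. Classically these are read off "by general position" after discarding degenerate cases via excluded middle; constructively each one must be deduced from the given strict hypotheses using Axiom~C7. The main obstacle, accordingly, is the systematic constructive verification of the apartness and "lies outside" relations: from the hypotheses that $l$ avoids every vertex and that the original triangles are distinct, I would repeatedly apply Axiom~C7 in the form "if $P\neq l\cdot m$ then $P\notin l$ or $P\notin m$" to the relevant triples of lines to force the required strict separations, and I would use the footnoted consequences of distinctness (the joining lines are distinct and the intersection points are distinct) to supply the distinctness of the auxiliary triangles. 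Once every such relation is affirmatively established, the two applications of the axiom combine to deliver the center together with the condition that it lies outside each side, completing the proof.

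Finally, I would verify the nondegeneracy of the center itself: after obtaining $O$ as the intersection of two of the joining lines, I must check constructively that the third joining line genuinely passes through $O$ (not merely that no obstruction exists) and that $O\notin AB$, $O\notin BC$, $O\notin CA$ as required by the definition of perspective-from-a-center. I expect this last bundle of "lies outside" verifications, rather than the incidence core, to be the step demanding the most care, since it is precisely where the original flawed argument in \cite{M16} mishandled an assumption; keeping each apartness conclusion tied to the hypothesis that actually entails it—and never carrying a conclusion beyond the scope of the assumption under which it was derived—will be the decisive discipline.
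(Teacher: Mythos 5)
Your proposal is correct and is essentially the argument the paper relies on: the paper does not reprove this theorem but cites \cite{M16}, where the converse is obtained exactly as you describe, by applying the Desargues axiom to auxiliary triangles (your pairing $AA'R$, $CC'P$, center-perspective from $Q$) and discharging the distinctness and \emph{lies outside} obligations of Definition~\ref{Defn Dist Triangles} through repeated use of Axiom C7. Note only that a single application of the axiom already yields concurrency---the collinearity of $AA'\cdot CC'$ with $B$ and $B'$ places the common point on $BB'$---so your second, relabelled application is not needed for concurrency, but at most as one convenient way to organize the remaining outside-each-side verifications for the center.
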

The preliminary results below will be required for the proof of the
invariance theorem. In classical work, harmonic conjugates are often
defined using quadrangles; this necessitates a separate definition
for the base points. Constructively, it is not known whether or not
an arbitrary point on the line coincides with a base point. Thus we
use the following less-problematic definition, which applies uniformly
to each point on the line.
\begin{defn}
\cite[Definition 4.1]{M16} \label{Defn}Let\emph{ $A$} and \emph{$B$}
be distinct points. For any point \emph{$C$} on the line $AB$, select
a line \emph{$l$} through $C$, distinct from $AB$, and select a
point \emph{$R$} lying outside each of the lines \emph{$AB$} and
$l$. Set $P=BR\cdot l$, $Q=AR\cdot l$, and \emph{$S=AP\cdot BQ$}.
Pending verification in  Theorem \ref{Main Theorem}, the point \emph{$D=AB\cdot RS$}
will be called \emph{the harmonic conjugate} \emph{of} $C$ \emph{with
respect to the points} $A,B$; we write $D=h(A,B;C)$. 
\end{defn}
\begin{lem}
\emph{\label{Lemma 42} \cite[Lemma 4.2]{M16} }In Definition \ref{Defn},
for the construction of a harmonic conjugate,

\emph{(a)} $P\neq A$, $Q\neq B$, $P\neq Q$.

\emph{(b)} $P\notin AR$, $Q\notin BR$, $A\notin BR$, $B\notin AR$.

\emph{(c)} $AR\neq BR,$ $AP\neq AR$, $AP\neq BR$, $BQ\neq BR$,
$BQ\neq AR$.
\end{lem}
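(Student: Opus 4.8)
The plan is to treat Axiom C7 as the sole nontrivial engine and to reduce everything else to two elementary properties of the strict outside relation recorded in \cite{M16}: that $P \notin l$ together with $P \in m$ forces $l \neq m$, and that $P \notin l$ together with $Q \in l$ forces $P \neq Q$. First I would record the defined-ness of the objects and the immediate consequences of the hypotheses $R \notin AB$ and $R \notin l$. Since $A, B \in AB$ and $R \notin AB$, the second property gives $R \neq A$ and $R \neq B$, so the lines $AR$ and $BR$ exist; the first property gives $AB \neq AR$ and $AB \neq BR$. Likewise, from $R \notin l$ and $C \in l$ we get $R \neq C$, and since $R \in AR$ and $R \in BR$ while $R \notin l$, the lines $AR$ and $BR$ are each distinct from $l$, so the intersection points $P = BR \cdot l$ and $Q = AR \cdot l$ are well defined; moreover $P, Q \in l$ with $R \notin l$ yields $P \neq R$ and $Q \neq R$.

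The heart of the argument is statement (b), and each of its four claims is an application of C7. For $A \notin BR$: the distinct lines $AB$ and $BR$ meet at $B$, so $AB \cdot BR = B$, and since $A \neq B$ we have $A \neq AB \cdot BR$; C7 then gives $A \notin AB$ or $A \notin BR$, and as $A \in AB$ excludes the first disjunct, $A \notin BR$. Symmetrically $B \notin AR$. From $B \notin AR$ and $B \in BR$ I obtain $AR \neq BR$ (the first clause of (c)), so these distinct lines meet at $R = AR \cdot BR$. Now for $P \notin AR$: since $P \neq R = AR \cdot BR$, C7 applied to $AR, BR$ gives $P \notin AR$ or $P \notin BR$, and $P \in BR$ excludes the second, so $P \notin AR$; symmetrically $Q \notin BR$. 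This completes (b).

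The remaining statements then follow mechanically. For (a): $P \notin AR$ with $A \in AR$ gives $P \neq A$; $Q \notin BR$ with $B \in BR$ gives $Q \neq B$; and $P \notin AR$ with $Q \in AR$ gives $P \neq Q$ (so in particular the lines $AP$ and $BQ$ are defined). For the rest of (c), each inequality of lines is obtained by exhibiting a point on one and outside the other: $AP \neq AR$ from $P \in AP$ and $P \notin AR$; $AP \neq BR$ from $A \in AP$ and $A \notin BR$; $BQ \neq BR$ from $Q \in BQ$ and $Q \notin BR$; and $BQ \neq AR$ from $B \in BQ$ and $B \notin AR$. I expect no real obstacle here; the only genuine step is the correct setup of each C7 application in the previous paragraph --- namely pinning down the intersection point ($B$, $A$, or $R$) and verifying the apartness of the tested point from it --- after which the constructive disjunction is resolved by discarding the disjunct that contradicts a known incidence.
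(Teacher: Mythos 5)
Your proof is correct, and it uses exactly the method this paper relies on throughout: setting up each application of Axiom C7 by identifying the intersection point of two distinct lines ($B$ for $A\notin BR$, $R$ for $P\notin AR$ and $Q\notin BR$), verifying the needed apartness, and then discharging the disjunct contradicted by a known incidence, with the point and line inequalities in (a) and (c) falling out of the strict outside relation. The paper itself states this lemma only by citation to \cite[Lemma 4.2]{M16} without reproducing the argument, but your derivation is essentially the standard one in that framework, including the correct ordering (the C7 steps of (b) first, then (a) and (c) as immediate consequences).
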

\begin{lem}
\emph{\label{Lm44}\cite[Lemma 4.4]{M16}} In Definition \ref{Defn},
$h(A,B;A)=A$ and $h(A,B;B)=B$, for any selection of the auxiliary
 elements $(l,R)$. 
\end{lem}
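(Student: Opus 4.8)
The plan is to exploit the fact that when $C$ coincides with a base point, the auxiliary intersections collapse onto that point, forcing $D$ to equal it. I would treat $h(A,B;A)=A$ in detail; the case $h(A,B;B)=B$ is entirely parallel, with the roles of $A,B$ and of $P,Q$ interchanged (there one first shows $P=B$, since $l$ then passes through $B$).

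First I would pin down $Q$. Since $C=A$, the selected line $l$ passes through $A$, so $A$ lies on both $AR$ and $l$; because $R\notin l$ these two lines are distinct, and hence their unique common point is $A$, giving $Q=AR\cdot l=A$. Consequently $BQ=BA=AB$, and the point $S$ to be constructed is $S=AP\cdot BQ=AP\cdot AB$, where $AP$ is defined since $P\neq A$ by Lemma \ref{Lemma 42}(a).

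The crucial step is to verify that $S=A$; this amounts to proving the affirmative relation $P\notin AB$, which is exactly what guarantees that the intersection $AP\cdot AB$ is defined and equals $A$. Here is where the constructive content lies. I would apply Axiom C7 to the distinct lines $l$ and $AB$, whose unique common point is $l\cdot AB=A$. By Lemma \ref{Lemma 42}(a) we have $P\neq A$, hence $P\neq l\cdot AB$, so C7 yields $P\notin l$ or $P\notin AB$. Since $P=BR\cdot l$ lies on $l$, the first alternative is incompatible with $P\in l$, leaving $P\notin AB$. This gives $AP\neq AB$, and since $A$ lies on both of these lines, $S=AP\cdot AB=A$.

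Finally, with $S=A$ we have $RS=RA$; as $R\notin AB$ the lines $AB$ and $RA$ are distinct and both contain $A$, so $D=AB\cdot RS=A$, as required. I expect the only real obstacle to be the middle step: classically one would merely assert that ``$P$ is not on $AB$'', but constructively the weak negation does not suffice to form the intersection $AP\cdot AB$, so the affirmative relation $P\notin AB$ must be extracted. Axiom C7, fed by the apartness $P\neq A$ supplied by Lemma \ref{Lemma 42}, is precisely the tool that delivers it.
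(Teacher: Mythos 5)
Your proof is correct. The paper itself does not reproduce a proof of this lemma --- it is quoted from \cite[Lemma 4.4]{M16} --- but your argument is the natural one and uses exactly the paper's toolkit: with $C=A$ the degenerate intersections collapse in sequence ($Q=A$, then $S=A$, then $D=A$), each collapse justified by the uniqueness-of-intersection content of Axiom C7 together with tightness of the inequality, and the one affirmative fact needed, $P\notin AB$, is correctly extracted from $P\neq A$ (Lemma \ref{Lemma 42}(a)) via C7 rather than merely asserted as a weak negation.
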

\begin{lem}
\emph{\label{Lm45}\cite[Lemma 4.5]{M16}} In Definition \ref{Defn},
for the construction of a harmonic conjugate, 

\emph{(a)} If $C\neq A$, then $Q\notin AB$, $Q\neq S$, $S\neq A$,
and $D\neq A$.

\emph{(b)} If $C\neq B$, then $P\notin AB$, $P\neq S$, $S\neq B,$
and $D\neq B$.
\end{lem}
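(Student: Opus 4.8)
The two parts are interchanged by the symmetry of the construction under $A\leftrightarrow B$, which swaps $P\leftrightarrow Q$ and fixes $R,l,C,S,D$; so the plan is to prove part (a) under the hypothesis $C\neq A$ and obtain (b) by relabelling. The whole argument would run on a single recurring move powered by Axiom C7: to establish an outside relation $X\notin m$, I locate a second line $n$ on which $X$ lies and which meets $m$ in an already-identified point $Y=n\cdot m$, verify the apartness $X\neq Y$, and apply C7, discarding the alternative $X\notin n$ (impossible, since $X$ lies on $n$). The needed apartness $X\neq Y$ is itself produced from a previously established outside relation, via the basic fact that a point lying outside a line is apart from every point on that line; line-distinctness is produced the same way, since a point outside $m$ but lying on $n$ forces $n\neq m$. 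Thus the proof is a bootstrapping chain of C7 applications, kept entirely within the affirmative relations $\neq$ and $\notin$.

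Concretely, for part (a) I would proceed in order as follows. Since $C$ lies on both $l$ and $AB$ and $l\neq AB$, we have $C=l\cdot AB$; as $A\neq C$ with $A$ on $AB$, Axiom C7 yields $A\notin l$. Because $Q=AR\cdot l$ lies on $l$, this gives $Q\neq A$; and since $R\notin AB$ forces $AR\neq AB$ with $AR\cdot AB=A$, a second application of C7 (to $Q$, using $Q\neq A$ and $Q$ on $AR$) delivers the first conclusion $Q\notin AB$. Next, $Q\notin AB$ gives $BQ\neq AB$ with $BQ\cdot AB=B$, so from $A\neq B$ and $A$ on $AB$ we get $A\notin BQ$; as $S=AP\cdot BQ$ lies on $BQ$, this yields $S\neq A$. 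For $Q\neq S$, note that $A\notin l$ gives $AP\neq l$ with $AP\cdot l=P$, and $P\neq Q$ (Lemma \ref{Lemma 42}(a)) lets C7 produce $Q\notin AP$; since $S$ lies on $AP$, we conclude $Q\neq S$.

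Finally, for $D\neq A$: from $S\neq A$ together with $AP\neq AR$ (Lemma \ref{Lemma 42}(c)) and $AP\cdot AR=A$, C7 gives $S\notin AR$; hence $RS\neq AR$ with $RS\cdot AR=R$, and since $A\neq R$ with $A$ on $AR$, one further application of C7 gives $A\notin RS$. As $D=AB\cdot RS$ lies on $RS$, this forces $D\neq A$, completing (a). The part I expect to require the most care is the strict bookkeeping of affirmative relations: every apartness and every line-distinctness feeding a C7 step must be traced back, without the Law of Excluded Middle, to an outside relation already in hand, and at each step one must confirm that the relevant join and meet are genuinely defined (for instance $RS$ exists because $S\notin AR$ gives $S\neq R$, and $D$ exists because $R\notin AB$ gives $RS\neq AB$). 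Securing the initial link $A\notin l$ and keeping the chain of outside relations unbroken is where the real content lies; once that discipline is maintained, the individual C7 invocations are routine.
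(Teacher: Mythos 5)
Your proof is correct, and it uses precisely the technique this paper relies on throughout: each outside relation or apartness is produced by an application of Axiom C7 fed by a previously established affirmative relation (together with the fact that a point outside a line is apart from every point on it), exactly as in the paper's own arguments such as the opening verification in Theorem \ref{ Theorem special case}. The paper itself states this lemma without proof, importing it from \cite{M16}, but your derivation --- including the $A\leftrightarrow B$ symmetry reduction of (b) to (a) and the existence checks for the line $RS$ and the point $D$ --- is sound and matches the paper's standard method, so there is nothing to correct.
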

\begin{lem}
\emph{\label{Lm46}\cite[Lemma 4.6]{M16}} In Definition \ref{Defn},
for the construction of a harmonic conjugate, let the point $C$ be
distinct from each base point; i.e., $C\neq A$ and $C\neq B$. Then
the four points $P,Q,R,S$ are distinct and lie outside the base line
$AB$, and each subset of three points is noncollinear. 
\end{lem}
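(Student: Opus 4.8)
The plan is to assemble the statement from Lemmas \ref{Lemma 42} and \ref{Lm45} together with a handful of fresh applications of Axiom C7, using throughout the basic incidence fact that a point lying on a line is distinct from any point lying outside that line. Several relations are immediate: $P\neq Q$ is Lemma \ref{Lemma 42}(a); $P\neq S$ and $S\neq B$ follow from Lemma \ref{Lm45}(b) since $C\neq B$; and $Q\neq S$ and $S\neq A$ follow from Lemma \ref{Lm45}(a) since $C\neq A$. Likewise $P\notin AB$ and $Q\notin AB$ come from Lemma \ref{Lm45}, while $R\notin AB$ is part of Definition \ref{Defn}. Since $P=BR\cdot l$ and $Q=AR\cdot l$ both lie on $l$ whereas $R\notin l$, the incidence fact gives $P\neq R$ and $Q\neq R$; and since $PQ=l$ (because $P\neq Q$ and both lie on $l$), the relation $R\notin l$ already shows that $P,Q,R$ are noncollinear.

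The next step is to place $R$ off the two sides $AP$ and $BQ$. Lemma \ref{Lemma 42}(c) gives $AP\neq AR$, and both lines pass through $A$, so $AP\cdot AR=A$; since $R\notin AB$ forces $R\neq A$, Axiom C7 applied to $AP$, $AR$, $R$ yields $R\notin AP$ or $R\notin AR$, and the second disjunct is refuted by $R\in AR$, leaving $R\notin AP$. The symmetric argument from $BQ\neq BR$ gives $R\notin BQ$. Because $S=AP\cdot BQ$ lies on both $AP$ and $BQ$, this at once delivers $R\neq S$; and, writing $PS=AP$ and $QS=BQ$ (valid since $P\neq S$ and $Q\neq S$), it also shows that $P,R,S$ and $Q,R,S$ are each noncollinear.

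Two items remain, and these I expect to be the crux: $S\notin AB$, and the noncollinearity of $P,Q,S$. For the first, the line $AP$ contains $P\notin AB$, so $AP\neq AB$ with $AP\cdot AB=A$; since $S\neq A$ and $S\in AP$, Axiom C7 applied to $AP$, $AB$, $S$ gives $S\notin AB$. For the second, I would first establish $A\notin l$: as $l\neq AB$ with $l\cdot AB=C$ and $C\neq A$, Axiom C7 applied to $l$, $AB$, $A$, combined with $A\in AB$, yields $A\notin l$. Then $AP$ contains $A\notin l$, so $AP\neq l$ with $AP\cdot l=P$; since $S\in AP$ and $S\neq P$, Axiom C7 applied to $AP$, $l$, $S$ gives $S\notin l$. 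As $PQ=l$, this is precisely the noncollinearity of $P,Q,S$.

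The main obstacle is this final relation $S\notin l$: it is the one fact that genuinely expresses that $S$ is a true \emph{fourth point} of the configuration rather than an accidental incidence, and its derivation is the only place where both hypotheses $C\neq A$ and $C\neq B$ are used simultaneously (the former to push $A$ off $l$, the latter to separate $S$ from $P$). Throughout, I would rely on the symmetry of noncollinearity -- that exhibiting one vertex outside the line of the other two suffices -- as a basic property of the plane; were that symmetry not available, each triple would instead require verifying the analogous outside-relation for its remaining vertices, obtained by the same C7 pattern.
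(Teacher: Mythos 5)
Your proof is correct: every pairwise distinctness, every outside-relation, and all four noncollinearity claims are derived validly, using exactly the Axiom C7 pattern (point distinct from the intersection of two distinct lines lies outside one of them, with one disjunct refuted by incidence) that this paper itself employs throughout Theorems \ref{ Theorem special case} and \ref{Main Theorem}. Note that the paper does not actually reproduce a proof of this lemma---it is cited from \cite[Lemma 4.6]{M16}---so there is no in-paper argument to compare against; but your decomposition via Lemmas \ref{Lemma 42} and \ref{Lm45}, with the fresh C7 steps for $R\notin AP$, $R\notin BQ$, $S\notin AB$, $A\notin l$, and finally $S\notin l$, is sound, complete, and entirely in the style of the source.
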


\section{The invariance theorem\label{Sec. 3 - Inv Thm.}}

In proving the invariance theorem, we consider first a situation in
which the configuration allows application of Desargues's Theorem
and its converse. 
\begin{thm}
\label{ Theorem special case}In Definition \ref{Defn}, let auxiliary
element selections $(l,R)$ and $(l',R')$ be used to construct harmonic
conjugates $D$ and $D'$ of the point $C$. If the point $C$ is
distinct from each base point; i.e., $C\neq A$ and $C\neq B$, and
\[
AR'\neq AR,\,BR'\neq BR,\,\,and\,\,l'\neq l,\,l'\neq CP_{1},\,l'\neq CQ{}_{1},
\]
 where $P_{1}=AP\cdot BR'$ and $Q_{1}=BQ\cdot AR'$, then $D=D'$. 
\end{thm}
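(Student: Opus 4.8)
The plan is to prove the equality $D=D'$ by a short sequence of Desargues arguments: two applications of the converse to manufacture a common center of perspectivity, followed by one application of Desargues's Theorem itself to push that center back onto the axis $AB$. Write $P'=BR'\cdot l'$, $Q'=AR'\cdot l'$, $S'=AP'\cdot BQ'$, and $D'=AB\cdot R'S'$ for the second construction. The engine of the proof is the batch of incidences built into Definition \ref{Defn}: since $P\in BR$ and $Q\in AR$ we have $PR=BR$, $QR=AR$, $PQ=l$, $PS=AP$, $QS=BQ$, together with the analogous primed identities and $l=CP=CQ$, $l'=CP'=CQ'$. These let every side-correspondence below land on the base line.

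First I would build the center. The triangles $PQR$ and $P'Q'R'$ have corresponding sides meeting in
\[
QR\cdot Q'R'=AR\cdot AR'=A,\quad PR\cdot P'R'=BR\cdot BR'=B,\quad PQ\cdot P'Q'=l\cdot l'=C,
\]
which are distinct (as $C\neq A$, $C\neq B$) and collinear on $AB$; these triangles are therefore perspective from the axis $AB$, and the converse of Desargues's Theorem makes them perspective from a center, so $PP'$, $QQ'$, $RR'$ concur at a point $O$. The same computation for $PQS$ and $P'Q'S'$, whose corresponding sides meet at $A=PS\cdot P'S'$, $B=QS\cdot Q'S'$, $C=PQ\cdot P'Q'$, shows $PP'$, $QQ'$, $SS'$ concurrent as well; since $PP'\neq QQ'$ (else $l=l'$), Axiom C7 identifies the two centers, so $RR'$ and $SS'$ both pass through $O$.

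Next I would transfer to the axis. The triangles $PRS$ and $P'R'S'$ now have $PP'$, $RR'$, $SS'$ all through $O$, hence are perspective from the center $O$; Desargues's Theorem makes them perspective from an axis. Two of the three collinear intersection points of corresponding sides are $PS\cdot P'S'=AP\cdot AP'=A$ and $PR\cdot P'R'=BR\cdot BR'=B$, so the axis is $AB$, and consequently $RS\cdot R'S'$ lies on $AB$. This point lies on $RS$ and on $AB$, so by uniqueness of intersection it equals $D=AB\cdot RS$; it likewise equals $D'=AB\cdot R'S'$; hence $D=D'$.

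The main obstacle is not this projective skeleton but the constructive discipline it demands. Desargues's Theorem, as stated, applies only to triangles that are \emph{distinct} in the sense of Definition \ref{Defn Dist Triangles}, and in the final step this requires $RS\neq R'S'$, which cannot be asserted outright. The crux is therefore to prove the equality $D=D'$ by \emph{refuting its negation}: assuming $D\neq D'$ forces $RS\neq R'S'$, supplying exactly the missing side-distinctness, and the axis conclusion then yields $D=D'$, a contradiction; tightness of the apartness delivers $D=D'$. Every other hypothesis of Definition \ref{Defn Dist Triangles} follows from the data with repeated appeal to Axiom C7 and Lemmas \ref{Lm45}, \ref{Lm46}: the vertices $P,Q,R,S$ and their primed analogues lie off $AB$, whence $P\neq P'$, $Q\neq Q'$, $R\neq R'$; one has $S\neq S'$ because $S\neq A$ (Lemma \ref{Lm45}, using $C\neq A$) once $AP\neq AP'$ is known; and the side inequalities $AP\neq AP'$ and $BQ\neq BQ'$ are precisely what the hypotheses $l'\neq CP_{1}$ and $l'\neq CQ_{1}$ provide, since rewriting $l'=CP'$, $l'=CQ'$ keeps $P'$ off $AP$ and $Q'$ off $BQ$. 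Verifying in addition the exterior conditions — that each axis avoids the relevant vertices and each center $O$ lies off the relevant sides — is the remaining genuinely constructive labor, and is where the careful use of Axiom C7 and Lemma \ref{Lm46} is indispensable.
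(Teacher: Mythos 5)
Your proposal is correct and follows essentially the same route as the paper's own proof: assume $D\neq D'$ (so that $RS\neq R'S'$ becomes available and tightness can close the argument), apply the converse of Desargues's Theorem to the pairs $PQR$, $P'Q'R'$ and $PQS$, $P'Q'S'$ to obtain a common center $O$ lying on both $RR'$ and $SS'$, then apply Desargues's Theorem to $PRS$, $P'R'S'$ to force $RS\cdot R'S'$ onto $AB$; in particular, your reading of the hypotheses $l'\neq CP_{1}$, $l'\neq CQ_{1}$ as what keeps $P'$ off $AP$ and $Q'$ off $BQ$ (hence $PS\neq P'S'$, $QS\neq Q'S'$, and then $S\neq S'$) is exactly the paper's step (2). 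The one verification you defer as ``remaining labor''---that $O$ lies outside $RS$ and $R'S'$---is the paper's step (5), and it is the only place requiring an idea beyond routine bookkeeping: set $E=RS\cdot R'S'$, use cotransitivity to split into the cases $E\neq S$ or $E\neq R$, and in either case two applications of Axiom C7 (first $S\notin R'S'$, so $SS'\neq R'S'$, then $O\neq S'=SS'\cdot R'S'$ gives $O\notin R'S'$; or the same with $R$, $RR'$, $O\neq R'$) settle $O\notin R'S'$, with $O\notin RS$ by symmetry. One small constructive blemish in your write-up: ``$PP'\neq QQ'$ (else $l=l'$)'' infers an apartness from a contradiction, which is not admissible; the needed distinctness of the joining lines $PP'$, $QQ'$, $SS'$ instead follows from the footnote to Definition \ref{Defn Dist Triangles} once the two triangles are known to be distinct, which is how the paper justifies setting $O=PP'\cdot QQ'$ and taking it as the center for both pairs.
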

\begin{proof}
We first check the validity of the conditions. Since $C\neq A$ and
$C\neq B$, the results of Lemma \ref{Lm46} will apply to the points
$P,Q,R,S$, and also to the points $P',Q',R',S'$. Since $R'\notin AB$,
we have $AB\neq BR'$. Since $A\neq B=AB\cdot BR'$, it follows from
Axiom C7 that $A\notin BR'$, and thus $AP\neq BR'$. By symmetry,
$BQ\neq AR'$. Thus the definitions of $P_{1}$ and $Q_{1}$ are valid.
Also, we see that $A\neq P_{1}$ and $B\neq Q_{1}$. Since $P\notin AB$,
we have $AB\neq AP$. Since $P_{1}\neq A=AB\cdot AP$, it follows
that $P_{1}\notin AB$, and thus $P_{1}\neq C$. Similarly, $Q_{1}\neq C$.
This shows that the conditions specified for $l'$ are meaningful. 

(1) Suppose that $D\neq D'$. 

(2) Since $R\neq A=AR\cdot AR'$, it follows that $R\notin AR'$,
and thus $R\neq R'$. From the conditions $AR'\neq AR$ and $BR'\neq BR$,
we see that $QR\neq Q'R'$ and $PR\neq P'R'$. By Lemma \ref{Lemma 42}(a),
we see that $l=PQ$ and $l'=P'Q'$; thus $PQ\neq P'Q'$. Since $D\neq D'=AB\cdot R'S'$,
it follows that $D\notin R'S'$, and thus $RS\neq R'S'$. Since $P'\neq C=l'\cdot CP_{1}$,
it follows that $P'\notin CP_{1}$, and thus $P'\neq P_{1}$. Since
$P'\neq P_{1}=AP\cdot BR'$, it follows that $P'\notin AP$, and thus
$P\neq P'$. Also, $AP\neq AP'$; i.e.,  $PS\neq P'S'$.\footnote{As for the necessity of taking the points $P_{1}$, $Q_{1}$ into
account, note that if the line $l'$ were to pass through the point
$P_{1}$, then we would have $PS=P'S'$. Similarly, if $l'$ were
to pass through $Q_{1}$, then we would have $QS=Q'S'$.} By symmetry, we have $Q'\notin BQ,$ $Q\neq Q'$, and $QS\neq Q'S'$.\footnote{Ibid.}
Since $S\neq A=AP\cdot AP'$, it follows that $S\notin AP'$, and
thus $S\neq S'$. 

The above, together with Lemma \ref{Lm46}, shows that the quadrangles
$PQRS$ and $P'Q'R'S'$ have distinct corresponding vertices and distinct
corresponding sides, that the corresponding contained triangles are
distinct, and that the line $AB$ avoids all eight vertices. 

(3) The triangles $PQR$ and $P'Q'R'$ have corresponding sides that
meet at points $QR\cdot Q'R'=A$, $PR\cdot P'R'=B$, and $PQ\cdot P'Q'=l\cdot l'=C$;
thus they are perspective from the axis $AB$. By the converse to
Desargues's Theorem, the triangles are perspective from a center;
setting $O=PP'\cdot QQ'$, it follows that $O\in RR'$. This also
shows that $O$ lies outside each of the six sides of these triangles,
and that $O\neq R$ and $O\neq R'$. 

(4) The triangles $PQS$ and $P'Q'S'$ are also perspective from the
axis $AB$; thus they are perspective from the center $O$, and it
follows that $O\in SS'$. Also, $O$ lies outside each of the six
sides of these triangles, $O\neq S$, and $O\neq S'$. 

(5) To apply Desargues's Theorem to the triangles $PRS$ and $P'R'S'$,
all the required distinctness conditions have been verified above,
except for one; it remains to be shown that the point $O$ lies outside
each of the sides $RS$ and $R'S'$. 

It was shown at (2) that $RS\neq R'S'$; define $E=RS\cdot R'S'$.
By cotransitivity,\footnote{For the properties of the inequality relation used here, see, e.g.,
\cite[Definition 2.1]{M16}. } either $E\neq S$ or $E\neq R$. In the first case, since $S\neq E=RS\cdot R'S'$,
it follows that $S\notin R'S'$, and thus $SS'\neq R'S'$. Since $O\neq S'$
= $SS'\cdot R'S'$, it follows that $O\notin R'S'$. In the second
case, since $R\neq E=RS\cdot R'S'$, it follows that $R\notin R'S'$,
and thus $RR'\neq R'S'$. Since $O\neq R'=RR'\cdot R'S'$, it follows
that $O\notin R'S'$. Thus in each case we obtain $O\notin R'S'$.
Similarly, either $E\neq S'$ or $E\neq R'$, and by symmetry we find
that $O\notin RS$. 

(6) Now the triangles $PRS$ and $P'R'S'$ are perspective from the
center $O$. By Desargues's Theorem, these triangles are perspective
from the axis $(PS\cdot P'S')(PR\cdot P'R')=AB$, and thus $RS\cdot R'S'\in AB$.
Hence $E=D$ and $E=D'$, contradicting our assumption at (1); thus
we have $\neg(D\neq D')$, and it follows from the tightness property
of the inequality relation\footnote{Ibid. } that $D=D'$. 
\end{proof}
\begin{thm}
\textsc{\label{Main Theorem}Invariance Theorem}\textsc{\emph{.}}
Let the projective plane \textup{$\mathbb{P}$} be such that at least
eight distinct lines\footnote{In \cite[Section 5]{M16}, Axiom E specified at least six lines through
any point. } pass through any given point. In Definition \ref{Defn}, let auxiliary
element selections $(l,R)$ and $(l',R')$ be used to construct harmonic
conjugates $D$ and $D'$ of the point $C$. Then $D=D'$; the harmonic
conjugate construction is independent of the choice of auxiliary  elements. 
\end{thm}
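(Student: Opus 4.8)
The plan is to reduce the general Invariance Theorem to the special case already established in Theorem~\ref{ Theorem special case}. The special case requires a long list of genericity conditions on the second auxiliary selection: $AR'\neq AR$, $BR'\neq BR$, $l'\neq l$, and $l'\neq CP_1$, $l'\neq CQ_1$. These conditions can fail for an arbitrary given pair $(l',R')$, so the idea is to introduce a third, auxiliary selection $(l'',R'')$ that is generic \emph{relative to both} $(l,R)$ and $(l',R')$. If such an intermediate selection exists, then by Theorem~\ref{ Theorem special case} applied twice we get $D=D''$ and $D'=D''$, whence $D=D'$ by symmetry and transitivity of equality. This ``triangulation through a third witness'' is the standard constructive technique for removing genericity hypotheses, and it is exactly why the theorem statement now imposes the hypothesis that at least eight distinct lines pass through any point --- one needs enough room through the point $C$ to choose $l''$ avoiding finitely many forbidden lines, and enough room to choose $R''$ avoiding finitely many forbidden lines through each relevant point.

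The key steps, in order, are as follows. First I would handle the degenerate cases $C=A$ and $C=B$: by Lemma~\ref{Lm44} we have $h(A,B;A)=A$ and $h(A,B;B)=B$ for \emph{any} auxiliary selection, so in either case $D=D'$ immediately. Constructively one cannot simply split on whether $C=A$, $C=B$, or $C\neq A,B$, so I expect this reduction to be slightly delicate: the aim is to prove $\neg(D\neq D')$ and invoke tightness, and under the assumption $D\neq D'$ one can use cotransitivity of the inequality to extract the needed apartness $C\neq A$ and $C\neq B$ (if $C$ were equal to a base point, $D$ would equal that base point for both selections, contradicting $D\neq D'$). So after assuming $D\neq D'$ for contradiction, I would first derive $C\neq A$ and $C\neq B$, placing us in the regime where Lemma~\ref{Lm46} and Theorem~\ref{ Theorem special case} are available.

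Next comes the construction of the intermediate selection $(l'',R'')$. Through the point $C$ I must choose a line $l''$ distinct from $AB$, and satisfying the five inequalities needed to apply the special case against $(l,R)$, together with the five analogous inequalities needed to apply it against $(l',R')$ --- in total $l''$ must avoid a fixed finite list of lines through $C$ (namely $AB$, $l$, $l'$, and the four lines $CP_1,CQ_1,CP_1',CQ_1'$ of the form $C\cdot(\text{auxiliary point})$). Since the hypothesis guarantees at least eight distinct lines through $C$ and the forbidden list is finite, such an $l''$ can be selected; this is the precise role of the eight-line axiom. Then I must choose $R''$ lying outside both $AB$ and $l''$, and outside enough lines to guarantee $AR''\neq AR$, $AR''\neq AR'$, $BR''\neq BR$, $BR''\neq BR'$; again this is a finite-avoidance selection, feasible because there are many lines through $A$ and through $B$. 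With $(l'',R'')$ in hand, producing harmonic conjugate $D''$, Theorem~\ref{ Theorem special case} gives $D=D''$ and $D'=D''$, hence $D=D'$, contradicting~(1); tightness then yields $D=D'$ unconditionally.

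The main obstacle I anticipate is the careful verification that the intermediate selection genuinely satisfies \emph{all} the hypotheses of Theorem~\ref{ Theorem special case} for \emph{both} comparisons simultaneously, stated in the asymmetric form the special case demands (e.g. it requires $l''\neq CP_1''$ where $P_1''=AP\cdot BR''$, and separately the $l'$-vs-$l''$ comparison introduces its own auxiliary intersection points). Keeping the bookkeeping of forbidden lines straight --- and confirming that each avoidance condition is an \emph{affirmative} inequality that can be secured by the counting axiom rather than merely a negation --- is where the real constructive content lies. I would organize this by listing explicitly the finite set of lines through $C$ that $l''$ must avoid and the finite set of lines through $A$ and through $B$ that $R''$ must avoid, then citing the eight-line hypothesis to perform each selection.
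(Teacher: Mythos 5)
Your strategy is the paper's own: assume $D\neq D'$, extract the apartness $C\neq A$ and $C\neq B$, interpose a third auxiliary selection $(l'',R'')$ that is generic relative to both given selections, apply Theorem~\ref{ Theorem special case} twice to get $D=D''=D'$, and conclude by tightness. The role you assign to the eight-line hypothesis (avoiding seven forbidden lines through $C$) is also exactly its role in the paper.

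However, one step fails as you have written it: your selection order is circular. You propose to choose $l''$ first, avoiding a ``fixed finite list'' that includes the lines $CP_1$, $CQ_1$ and their primed analogues; but these points are $P_1=AP\cdot BR''$, $Q_1=BQ\cdot AR''$, $P_2=AP'\cdot BR''$, $Q_2=BQ'\cdot AR''$ --- they do not exist until $R''$ has been fixed, so the list is not fixed at the moment you need it. (Your ``main obstacle'' paragraph shows you see this dependence, but you never draw the required conclusion, which is to reverse the order.) The paper chooses $R''$ first, and does so by constructing it as an intersection: a line $m$ through $A$ with $m\neq AB$, $m\neq AR$, $m\neq AR'$, a line $n$ through $B$ with $n\neq AB$, $n\neq BR$, $n\neq BR'$, and $R''=m\cdot n$. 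This also answers a point you glossed over: the axiom supplies lines through a point, not points outside finitely many lines, so ``select $R''$ outside enough lines'' must be implemented this way. Only then are $P_1,Q_1,P_2,Q_2$ defined, and $l''$ is selected through $C$ avoiding the seven lines $CR''$, $l$, $CP_1$, $CQ_1$, $l'$, $CP_2$, $CQ_2$; the condition $l''\neq CR''$ then yields $R''\notin l''$ by Axiom C7, which in your ordering had to be imposed on $R''$ directly. A final caution: your parenthetical justification of $C\neq A$ (``if $C$ were equal to a base point, $D$ would equal that base point for both selections'') proves only the negation $\neg(C=A)$, which is constructively too weak; the affirmative apartness comes, as the paper shows, from the case $A\neq D$ supplied by cotransitivity, via the Axiom C7 chain $A\notin RS$, $A\neq S$, $A\notin BQ$, $A\neq Q$, $A\notin l$, hence $A\neq C$. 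You did invoke cotransitivity for this purpose, so your plan is sound, but it is that chain, not the parenthetical, that carries the constructive content.
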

\begin{proof}
We construct a third selection of auxiliary elements, and then utilize
two applications of Theorem \ref{ Theorem special case}. 

(1) Suppose that $D\neq D'$. 

(2) By cotransitivity, either $A\neq D$ or $A\neq D'$; by symmetry,
it will suffice to consider the first case. Since $A\neq D=AB\cdot RS$,
it follows from Axiom C7 that $A\notin RS$, and thus $A\neq S$.
Since $A\neq S=AP\cdot BQ$, it follows that $A\notin BQ$, and thus
$A\neq Q$. Since $A\neq Q=AR\cdot l$, it follows that $A\notin l$,
and thus $A\neq C$. Similarly, $B\neq C$. Thus the point $C$ is
distinct from each base point. 

(3) Select a line $m$ through the point $A$ such that $m\neq AB$,
$m\neq AR$, and $m\neq AR'$. Select a line $n$ through the point
$B$ such that $n\neq AB$, $n\neq BR$, and $n\neq BR'$. Since $A\neq B=AB\cdot n$,
it follows that $A\notin n$, and thus $m\neq n$. Define $R''=m\cdot n$;
since $A\notin n$, we have $A\neq R''$. Since $R''\neq A=AB\cdot m$,
it follows that $R''\notin AB$. 

Since $AR''=m$, it is clear that $AR''\neq AR$, and $AR''\neq AR'$;
by symmetry, $BR''\neq BR$ and $BR''\neq BR'$. As verified in Theorem
\ref{ Theorem special case}, we may define the points $P_{1}=AP\cdot BR''$,
$Q_{1}=BQ\cdot AR''$, $P_{2}=AP'\cdot BR''$, $Q_{2}=BQ'\cdot AR''$,
and  note that these four points are  distinct from the point $C$. 

Now select a line $l''$ through the point $C$ such that $l''\neq CR''$;
$l''\neq l$, $l''\neq CP_{1}$, $l''\neq CQ{}_{1}$; and $l''\neq l'$,
$l''\neq CP_{2}$, $l''\neq CQ{}_{2}$. Since $R''\neq C=CR''\cdot l''$,
it follows that $R''\notin l''$. 

(4) The above shows that the auxiliary element selection $(l'',R'')$
satisfies the conditions for Definition \ref{Defn}, resulting in
a harmonic conjugate $D''$. This third selection $(l'',R'')$ also
satisfies the conditions of Theorem \ref{ Theorem special case},
relating it to each of the selections $(l,R)$ and $(l',R')$. Two
applications of Theorem \ref{ Theorem special case} now show that
$D''=D$ and $D''=D'$, contradicting our assumption at (1); thus
we have $\neg(D\neq D')$, and it follows that $D=D'$. 
\end{proof}
The harmonic conjugates constructed here can now be related to the
traditional quadrangle configuration.\footnote{See, e.g., \cite[Chapter IV]{VY10}.} 
\begin{cor}
\emph{\cite[Lemma 4.8]{M16}} Let $A,B,C,D$ be collinear points,
with $A\neq B$, and $C$ distinct from both points $A$ and $B$.
Then $D=h(A,B;C$) if and only if there exists a quadrangle $PQRS$,
with vertices outside the line $AB$, such that $A=PS\cdot QR$, $B=PR\cdot QS$,
$C\in PQ$, and $D\in RS$. 
\end{cor}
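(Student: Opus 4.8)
The plan is to prove the two implications separately. The forward direction is essentially a transcription of the construction in Definition \ref{Defn}, while the reverse direction recovers an admissible auxiliary selection from the given quadrangle and then appeals to the Invariance Theorem (Theorem \ref{Main Theorem}).

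For the forward direction, suppose $D=h(A,B;C)$. Choose any admissible auxiliary selection $(l,R)$ as in Definition \ref{Defn}, and form $P=BR\cdot l$, $Q=AR\cdot l$, $S=AP\cdot BQ$. One reads off the sides directly from the construction: $PQ=l$, so $C\in PQ$; $PR=BR$ and $QS=BQ$, so $B\in PR$ and $B\in QS$, giving $B=PR\cdot QS$; while $QR=AR$ and $PS=AP$, so $A\in QR$ and $A\in PS$, giving $A=PS\cdot QR$. Since $C\neq A$ and $C\neq B$, Lemma \ref{Lm46} guarantees that $P,Q,R,S$ lie outside $AB$ and that no three are collinear, so $PQRS$ is a genuine quadrangle with the required diagonal incidences. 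Finally the constructed point $AB\cdot RS$ equals $h(A,B;C)=D$, so $D\in RS$, and the desired quadrangle exists.

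For the reverse direction, suppose we are given a quadrangle $PQRS$ with vertices off $AB$ and with $A=PS\cdot QR$, $B=PR\cdot QS$, $C\in PQ$, and $D\in RS$. I would take the auxiliary pair $(l,R)$ with $l=PQ$. First one verifies admissibility: $l\neq AB$ because $P\in PQ$ while $P\notin AB$; the condition $R\notin AB$ is given; and $R\notin PQ=l$ follows from the non-degeneracy of the quadrangle. The incidence hypotheses then identify the constructed points with the given ones: from $B\in PR$ we obtain $PR=BR$, hence $P\in BR$ and $P=BR\cdot l$; from $A\in QR$ we obtain $QR=AR$, hence $Q=AR\cdot l$; from $A\in PS$ and $B\in QS$ we obtain $PS=AP$ and $QS=BQ$, hence $S=AP\cdot BQ$; and since $D\in RS$ and $D\in AB$ we obtain $D=AB\cdot RS$. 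Thus the given quadrangle is exactly the configuration produced by the selection $(PQ,R)$, so $D$ is the harmonic conjugate of $C$ for this selection, and Theorem \ref{Main Theorem} yields $D=h(A,B;C)$.

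The main obstacle is the constructive bookkeeping in the reverse direction: each line identification (such as $PR=BR$, and the resulting $P=BR\cdot l$) requires the relevant lines to be genuinely distinct and their intersection to be unique, which here means repeatedly invoking Axiom C7 together with the general-position and off-$AB$ conditions on the quadrangle. The delicate point is to extract these distinctness facts constructively from the definition of a quadrangle rather than through a forbidden case analysis on incidence; once the non-degeneracy of $PQRS$ is pinned down, the identifications and the appeal to invariance are routine.
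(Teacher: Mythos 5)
Your proof is correct and is essentially the argument the paper intends: the paper gives no proof of this corollary, stating it immediately after the Invariance Theorem as a direct consequence of the construction (citing \cite[Lemma 4.8]{M16}). Your two directions---reading the quadrangle off Definition \ref{Defn} via Lemmas \ref{Lemma 42}, \ref{Lm45}, and \ref{Lm46}, and conversely recovering the admissible selection $(PQ,R)$ from the quadrangle and invoking Theorem \ref{Main Theorem}---are exactly that consequence, with the Axiom C7/tightness bookkeeping correctly identified as the only remaining routine work.
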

Additional results concerning constructive harmonic conjugates, including
applications to projectivities, involutions, and polarity, will be
found in \cite{M16}. \\

\noindent \textbf{Acknowledgments.} \label{Acknowledgments}Thanks
are due Guillermo Calderón for discovering the error in the proof
of the invariance theorem in \cite{M16}, and for the communication
\cite{C1}. \label{References}

\noindent {\small{}Department of Mathematics}{\small \par}

\noindent {\small{}New Mexico State University}{\small \par}

\noindent {\small{}Las Cruces, New Mexico 88003 USA }{\small \par}

\noindent \emph{\small{}e-mail:}{\small{} mandelkern@zianet.com, mmandelk@nmsu.edu}{\small \par}

\noindent \emph{\small{}web:}{\small{} www.zianet.com/mandelkern }\\
{\small \par}

\noindent {\small{}April 28, 2018; rev. May 9, 2018. }\\
{\small \par}

\noindent 
\end{document}